\documentclass[11pt]{amsart}

\usepackage[T1]{fontenc}
\usepackage{lmodern}
\usepackage{amsmath,amssymb,amsthm,mathtools}
\usepackage{booktabs}
\usepackage[margin=1in]{geometry}
\usepackage{microtype}
\usepackage[hidelinks]{hyperref}

\newtheorem{theorem}{Theorem}[section]
\newtheorem{proposition}[theorem]{Proposition}

\theoremstyle{remark}

\newcommand{\F}{\mathbb{F}}

\title{A Two-Dimensional Counterexample to Radical Equality in Primitive Axial Algebras}
\author[B. Peng]{
  Bo Peng \\ \vspace{2pt}
  \textmd{\small Institute of Mathematical Sciences, ShanghaiTech University}
}
\date{\today}

\hypersetup{
  pdftitle={A Two-Dimensional Counterexample to Radical Equality in Primitive Axial Algebras},
  pdfauthor={Bo Peng}
}

\begin{document}
\begin{abstract}
  Let $R(A,X)$ denote the largest ideal of a primitive axial algebra $(A,X)$
  that contains no axis from the specified generating set $X$, and let $J(A)$
  be the intersection of the maximal ideals of $A$. Mamontov, Shpectorov, and
  Zhelyabin asked whether $R(A,X)=J(A)$ always holds. We give a negative answer.
  Over every field of characteristic different from $2$, the two-dimensional
  commutative algebra with basis $a,b$ and multiplication
  \[
    a^2=a,\qquad ab=2b,\qquad b^2=b
  \]
  is a primitive axial algebra for an explicit fusion law and the generating
  set $X=\{a,b\}$. Its complete ideal lattice is
  $0<\F b<A$, whence $R(A,X)=0$ and $J(A)=\F b$; in particular, the primitive
  axis $b$ lies in $J(A)$. Over $\mathbb{C}$, this axial presentation is
  equivalent to the previously classified
  $D(-1),\{e_2,a_6\}$ presentation with fusion law $F_{D3}$. Thus the algebra,
  the axial structure, and the vanishing of the axial radical are known; the
  point is the resulting Jacobson-radical computation and its consequence for
  the radical-equality question.
\end{abstract}

\maketitle

\medskip
\noindent\textbf{Keywords.}
Axial algebra, primitive axis, fusion law, Jacobson radical, axial radical.

\section{Introduction}

Axial algebras are commutative, generally nonassociative algebras generated by
semisimple idempotents whose eigenspaces obey a prescribed fusion law. The
structure theory of primitive axial algebras associates to a specified
generating set of axes $X$ a distinguished ideal: the axial radical
$R(A,X)$, the unique largest ideal containing no member of $X$; see
\cite{KhasrawMcInroyShpectorov2020}. The dependence on $X$ is essential, so we
retain it in the notation throughout this note.

Mamontov, Shpectorov, and Zhelyabin define the Jacobson radical of an axial
algebra to be
\[
  J(A)=\bigcap\{M\mid M\text{ is a maximal ideal of }A\},
\]
with $J(A)=A$ if no maximal ideals exist \cite{MamontovShpectorovZhelyabin2026}.
They prove, in particular, that
\[
  R(A,X)\subseteq J(A)
\]
for every primitive axial algebra. If a Frobenius form is fixed, their full
chain is $R(A,X)\subseteq J(A)\subseteq A^\perp$. They then ask whether the
first inclusion is always an equality, equivalently whether $J(A)$ can contain
a primitive generating axis
\cite[Question~9.1]{MamontovShpectorovZhelyabin2026}. The same question is
recorded as Problem~3.6 in \cite{GorshkovShpectorov2026}.

We show that the inclusion can be strict in dimension two. The example is
defined over every field of characteristic different from $2$, and its ideal
lattice can be determined without any classification theorem. We then identify
the corresponding complex axial presentation inside the two-dimensional
classification of Kaygorodov, Mart\'{\i}n Gonz\'{a}lez, and
P\'{a}ez-Guill\'{a}n \cite{KaygorodovMartinPaez2025}. This identification is
included both to locate the example accurately in the literature and to make
clear that no new family of axial algebras is being claimed.

\section{The axial structure}

Let $\F$ be a field with $\operatorname{char}\F\ne 2$. Consider the fusion law
on $\{0,1,2\}\subseteq\F$ given by Table~\ref{tab:fusion}. A blank entry means
the empty set.

\begin{table}[ht]
  \centering
  \renewcommand{\arraystretch}{1.15}
  \begin{tabular}{c|ccc}
    $\star$ & $0$           & $1$           & $2$           \\
    \hline
    $0$     & $\{0,1\}$     & $\varnothing$ & $\varnothing$ \\
    $1$     & $\varnothing$ & $\{1\}$       & $\{2\}$       \\
    $2$     & $\varnothing$ & $\{2\}$       & $\{2\}$
  \end{tabular}
  \caption{The fusion law used in the counterexample.}
  \label{tab:fusion}
\end{table}

For an idempotent $x$, write $A_\lambda(x)$ for the $\lambda$-eigenspace of
left multiplication $L_x$. As usual, an eigenspace is understood to be zero
when the corresponding scalar is not in the spectrum.

\begin{theorem}\label{thm:counterexample}
  Let $A=\F a\oplus\F b$ be the commutative algebra defined by
  \begin{equation}\label{eq:multiplication}
    a^2=a,\qquad ab=2b,\qquad b^2=b.
  \end{equation}
  With the fusion law in Table~\ref{tab:fusion} and the specified generating set
  $X=\{a,b\}$, the pair $(A,X)$ is a primitive axial algebra. Moreover,
  \[
    R(A,X)=0
    \qquad\text{and}\qquad
    J(A)=\F b.
  \]
  Consequently, $R(A,X)\subsetneq J(A)$ and the primitive axis $b$ belongs to
  $J(A)$.
\end{theorem}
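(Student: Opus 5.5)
The plan is a direct verification in three stages: compute the eigenspace decompositions of $L_a$ and $L_b$ and check the axial axioms; determine the full ideal lattice of $A$ by hand; and then read off $R(A,X)$ and $J(A)$ from that lattice. No earlier result is needed beyond the definitions recalled in the introduction. Throughout, $\operatorname{char}\F\ne 2$ ensures that $0,1,2$ are pairwise distinct in $\F$, so the fusion law of Table~\ref{tab:fusion} is well defined and eigenspaces for different labels intersect trivially.

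\emph{The axis $a$.} By \eqref{eq:multiplication}, $L_a$ acts diagonally in the basis $(a,b)$: $L_a a=a$ and $L_a b=2b$. Hence $A_1(a)=\F a$, $A_2(a)=\F b$ and $A_0(a)=0$, so $a$ is semisimple and primitive. The fusion rules to check are:
\begin{itemize}
  \item $1\star1=\{1\}$, since $a\cdot a=a$;
  \item $1\star2=\{2\}$, since $a\cdot b=2b\in A_2(a)$;
  \item $2\star2=\{2\}$, since $b\cdot b=b\in A_2(a)$.
\end{itemize}
Every rule involving the label $0$ is vacuous because $A_0(a)=0$.

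\emph{The axis $b$.} Here $L_b a=2b$ and $L_b b=b$. The kernel of $L_b$ is spanned by $a-2b$, since $b(a-2b)=2b-2b=0$. So $A_0(b)=\F(a-2b)$, $A_1(b)=\F b$ and $A_2(b)=0$, and these two eigenspaces span $A$. Thus $b$ is semisimple and primitive. The fusion rules to check are:
\begin{itemize}
  \item $1\star1$: $b\cdot b=b$;
  \item $0\star1=\varnothing$: $b(a-2b)=0$;
  \item $0\star0=\{0,1\}$: expanding gives $(a-2b)^2=a-8b+4b=a-4b=(a-2b)-2b$, which lies in $A_0(b)+A_1(b)$.
\end{itemize}
Since $X=\{a,b\}$ spans $A$, it generates $A$, so $(A,X)$ is a primitive axial algebra.

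\emph{The ideal lattice.} Any nonzero proper ideal is one-dimensional, say $\F v$ with $v=\alpha a+\beta b$. Then $bv=(2\alpha+\beta)b$ must lie in $\F v$.

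If $\alpha\ne0$, then $v\notin\F b$, so this forces $2\alpha+\beta=0$ and we may take $v=a-2b$. But $av=a-4b$ is not a multiple of $a-2b$, since that would require $-4=-2$, which is impossible in characteristic $\ne2$. So there is no ideal in this case.

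Hence $\alpha=0$ and $v=b$. Indeed $\F b$ is an ideal, because $ab=2b$ and $b^2=b$.

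So the ideal lattice is exactly $0<\F b<A$.

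\emph{Reading off the radicals.} The only maximal ideal is $\F b$, so $J(A)=\F b$. The ideals $\F b$ and $A$ both contain the axis $b\in X$, so the largest ideal containing no element of $X$ is $R(A,X)=0$. This gives $R(A,X)=0\subsetneq\F b=J(A)$ with $b\in J(A)$.

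There is no real obstacle here. The one step needing care is the fusion check $0\star0$ for $b$, which is the only place where a product lands in a sum of two eigenspaces. The other point to watch is where $\operatorname{char}\F\ne2$ is used: it keeps $1\ne2$ and $0\ne2$ distinct as eigenvalues, and it excludes $a-2b$ from being an ideal generator.
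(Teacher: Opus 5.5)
Your proposal is correct and follows the paper's proof essentially step for step: the same eigenspace decompositions and fusion checks (including $(a-2b)^2=(a-2b)-2b$), the same reduction to one-dimensional ideals $\F v$, and the same reading-off of $J(A)=\F b$ and $R(A,X)=0$. Your case split (first $L_b$ forces $v\in\F b$ or $v\in\F(a-2b)$, then $L_a$ excludes $a-2b$) is just an unrolled form of the paper's two determinant conditions $xy=0$ and $x(2x+y)=0$; one harmless slip in your closing remark is that $1\ne2$ holds in every field, so the hypothesis $\operatorname{char}\F\ne2$ is used only to separate $0$ from $2$ and to rule out $\F(a-2b)$ as an ideal.
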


\begin{proof}
  Both $a$ and $b$ are idempotents, and $X$ is a basis of $A$, hence generates
  $A$. In the ordered basis $(a,b)$, their adjoint operators are
  \[
    L_a=
    \begin{pmatrix}
      1 & 0 \\
      0 & 2
    \end{pmatrix},
    \qquad
    L_b=
    \begin{pmatrix}
      0 & 0 \\
      2 & 1
    \end{pmatrix}.
  \]
  It follows that
  \begin{align*}
    A_1(a) & =\F a, & A_2(a) & =\F b,     & A_0(a) & =0, \\
    A_1(b) & =\F b, & A_0(b) & =\F(a-2b), & A_2(b) & =0.
  \end{align*}
  Thus both operators are diagonalizable and both $1$-eigenspaces are
  one-dimensional, so $a$ and $b$ are primitive.

  It remains to check the fusion law. For the axis $a$, the only nonvacuous
  products of eigenspaces are represented by
  \[
    a^2=a\in A_1(a),\qquad
    ab=2b\in A_2(a),\qquad
    b^2=b\in A_2(a).
  \]
  For the axis $b$, put $c=a-2b$. Then
  \[
    bc=0,
    \qquad
    c^2=a-4b=c-2b\in A_0(b)+A_1(b),
    \qquad
    b^2=b\in A_1(b).
  \]
  These are exactly the nonvacuous requirements in Table~\ref{tab:fusion}.
  Hence $(A,X)$ is a primitive axial algebra for this fusion law.

  We next determine every ideal of $A$. Any nonzero proper ideal is
  one-dimensional, say $I=\F v$ with $v=xa+yb\ne0$. Since $A$ is generated as a
  vector space by $a$ and $b$, the subspace $I$ is an ideal precisely when
  $L_a(v)$ and $L_b(v)$ both belong to $\F v$. In coordinates,
  \[
    L_a(v)=xa+2yb,
    \qquad
    L_b(v)=(2x+y)b.
  \]
  The two collinearity conditions are therefore
  \begin{equation}\label{eq:ideal-equations}
    \det\begin{pmatrix}x&x\\y&2y\end{pmatrix}=xy=0,
    \qquad
    \det\begin{pmatrix}x&0\\y&2x+y\end{pmatrix}=x(2x+y)=0.
  \end{equation}
  If $x=0$, then $I=\F b$, which is indeed an ideal. If $x\ne0$, the first
  equation gives $y=0$, while the second gives $2x^2=0$, contradicting
  $\operatorname{char}\F\ne2$. Thus the complete ideal lattice is
  \[
    0<\F b<A.
  \]
  In particular, $\F b$ is the unique maximal ideal, and so $J(A)=\F b$.

  Finally, the ideals $\F b$ and $A$ both contain an axis from the specified set
  $X$: the former contains $b$, and the latter contains both $a$ and $b$.
  Therefore zero is the unique ideal avoiding every axis in $X$, and
  $R(A,X)=0$. This proves all the assertions.
\end{proof}

Taking $\F=\mathbb{Q}$ already gives a counterexample to the unrestricted
question. The field-general form of Theorem~\ref{thm:counterexample} is included
only because the same proof requires no additional work.

\section{Relation with the known two-dimensional classification}

Kaygorodov, Mart\'{\i}n Gonz\'{a}lez, and P\'{a}ez-Guill\'{a}n list the complex
two-dimensional axial algebras $D(\beta)$ with multiplication
\begin{equation}\label{eq:D-beta}
  {e_1}^2=e_1,\qquad e_1e_2=\beta e_2,\qquad {e_2}^2=e_2
\end{equation}
and several possible specified generating sets and fusion laws
\cite[Example~1.1 and Tables~4--5]{KaygorodovMartinPaez2025}. Their notation
includes
\[
  a_6=e_1+(1-2\beta)e_2.
\]

\begin{proposition}\label{prop:classification}
  After scalar extension to $\mathbb{C}$, the axial algebra in
  Theorem~\ref{thm:counterexample} is the presentation
  \[
    D(-1),\qquad X=\{e_2,a_6\},\qquad \text{fusion law }F_{D3}.
  \]
\end{proposition}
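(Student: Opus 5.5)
With $\beta=-1$ the algebra $D(-1)$ is ${e_1}^2=e_1$, $e_1e_2=-e_2$, ${e_2}^2=e_2$, and $a_6=e_1+3e_2$. The plan is to write down an explicit linear isomorphism $\varphi\colon A\otimes\mathbb{C}\to D(-1)$ with $\varphi(b)=e_2$ and $\varphi(a)=a_6$. Then I will check that it is an algebra isomorphism carrying $X$ to $\{e_2,a_6\}$ and the fusion law of Table~\ref{tab:fusion} to $F_{D3}$.

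\textbf{Multiplicativity.} Since $e_2$ and $a_6$ form a basis, it suffices to verify three products:
\begin{itemize}
  \item ${e_2}^2=e_2$, which matches $b^2=b$;
  \item $a_6e_2=e_1e_2+3e_2=-e_2+3e_2=2e_2$, which matches $ab=2b$;
  \item ${a_6}^2=e_1+6e_1e_2+9e_2=e_1-6e_2+9e_2=e_1+3e_2=a_6$, which matches $a^2=a$.
\end{itemize}
So $\varphi$ is an isomorphism of algebras and maps $X=\{a,b\}$ onto $\{e_2,a_6\}$.

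\textbf{Fusion law.} Because $\varphi$ is an isomorphism, it transports eigenspaces. Hence, by the proof of Theorem~\ref{thm:counterexample}:
\begin{itemize}
  \item $a_6$ has eigenvalues $1$ on $\F a_6$ and $2$ on $\F e_2$;
  \item $e_2$ has eigenvalues $1$ on $\F e_2$ and $0$ on $\F(a_6-2e_2)=\F(e_1+e_2)$.
\end{itemize}
As a consistency check directly in $D(-1)$: $e_2(e_1+e_2)=-e_2+e_2=0$.

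\textbf{Main obstacle.} The remaining step, and the one I expect to be hardest, is bookkeeping against \cite[Tables~4--5]{KaygorodovMartinPaez2025}. I need to confirm that the row for $D(\beta)$ at $\beta=-1$ with generating set $\{e_2,a_6\}$ lists the fusion law $F_{D3}$. I also need to confirm that $F_{D3}$ coincides with Table~\ref{tab:fusion}, possibly after the evident relabelling of eigenvalues (there $2=1-2\beta$ at $\beta=-1$), and noting that the fusion law in that classification may be the minimal one attached to this generating set. If $F_{D3}$ is stated as a minimal law realised by the presentation, I will check that the products computed in the proof of Theorem~\ref{thm:counterexample} are exactly the ones realised: $1\star1=\{1\}$, $1\star2=\{2\}$, $2\star2=\{2\}$ for $a$, and $0\star0=\{0,1\}$ (since $c^2$ has nonzero components in both $A_0(b)$ and $A_1(b)$) and $1\star1=\{1\}$ for $b$. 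This shows that no entry of Table~\ref{tab:fusion} can be shrunk, so the two laws agree. Finally, extending scalars to $\mathbb{C}$ does not change any of these computations, since all structure constants are rational.
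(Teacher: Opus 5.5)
This is the paper's argument: you put $\beta=-1$, send $b\mapsto e_2$ and $a\mapsto a_6=e_1+3e_2$, check the three products, and then compare fusion laws. The paper settles your ``main obstacle'' simply by recalling that $F_{D3}$ has eigenvalue set $\{1,1-\beta,0\}$, which is $\{1,2,0\}$ at $\beta=-1$, with nonempty products $1\star2=\{2\}$, $2\star2=\{2\}$, $0\star0=\{0,1\}$, i.e.\ Table~\ref{tab:fusion}; your minimality check is extra but correct.

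Fix the parenthetical, though. The relevant eigenvalue is $1-\beta$, since $a_6e_2=\beta e_2+(1-2\beta)e_2=(1-\beta)e_2$. The quantity $1-2\beta$ is the coefficient of $e_2$ in $a_6$, and it equals $3$, not $2$, at $\beta=-1$.
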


\begin{proof}
  Set $\beta=-1$, $b=e_2$, and $a=a_6=e_1+3e_2$. From
  \eqref{eq:D-beta},
  \[
    a^2=e_1-6e_2+9e_2=e_1+3e_2=a,
    \qquad
    ab=-e_2+3e_2=2b,
    \qquad
    b^2=b.
  \]
  Thus the multiplication is \eqref{eq:multiplication}, and the specified set
  $\{e_2,a_6\}$ is exactly $\{b,a\}$. At $\beta=-1$, the $F_{D3}$ eigenvalue set
  $\{1,1-\beta,0\}$ is $\{1,2,0\}$, and its nonzero fusion products are
  \[
    1\star2=\{2\},\qquad
    2\star2=\{2\},\qquad
    0\star0=\{0,1\},
  \]
  which is Table~\ref{tab:fusion}.
\end{proof}

The use of $\beta=-1$ avoids any ambiguity caused by the orbit-representative
convention in the classification. A formal substitution $\beta=2$ in
\eqref{eq:D-beta} with the generating set $\{e_1,e_2\}$ also reproduces
\eqref{eq:multiplication} and the displayed fusion table, but the literal label
$D(2)$ depends on the authors' choice of representatives.

The same source states that, among its displayed generating sets on
$D(\beta)$, only $\{e_1,a_6\}$ has nonzero axial radical
\cite[p.~5]{KaygorodovMartinPaez2025}. In particular, the equivalent
$\{e_2,a_6\}$ presentation above was already known to have axial radical zero.
Accordingly, Proposition~\ref{prop:classification} is a prior-art
identification, not a claim to a new two-dimensional algebra, fusion law, or
axial-radical computation.

\section{Compatibility with the Frobenius-form results}

The counterexample does not contradict the positive equality result under
nonsingularity of the generating axes. Indeed, let $\langle\ ,\ \rangle$ be an
associating symmetric bilinear form on $A$. Associativity with the product gives
\[
  \langle a,b\rangle
  =\langle a^2,b\rangle
  =\langle a,ab\rangle
  =2\langle a,b\rangle,
\]
so $\langle a,b\rangle=0$. Similarly,
\[
  2\langle b,b\rangle
  =\langle ab,b\rangle
  =\langle a,b^2\rangle
  =\langle a,b\rangle=0.
\]
Since $\operatorname{char}\F\ne2$, we obtain $\langle b,b\rangle=0$, and hence
$b$ lies in the radical of every such form. Thus the primitive axis contained
in $J(A)$ is necessarily singular. This is precisely the situation excluded by
the hypothesis that all generating axes be nonsingular, under which
$R(A,X)=J(A)=A^\perp$ \cite{MamontovShpectorovZhelyabin2026}.

\section{Conclusion}

The two-dimensional algebra in Theorem~\ref{thm:counterexample} gives
\[
  R(A,X)=0\subsetneq\F b=J(A),
\]
and therefore answers the unrestricted radical-equality question in the
negative. The example also shows why the specified generating set $X$ cannot
be suppressed: the obstruction is exactly that one of its primitive axes lies
in the unique maximal ideal. Its small dimension and known place in the
$D(\beta)$ classification make the failure transparent.

\bibliographystyle{alpha}
\bibliography{references}

@misc{GorshkovShpectorov2026,
  author       = {Gorshkov, I. and Shpectorov, S.},
  title        = {Axial Algebras: Questions and Conjectures},
  year         = {2026},
  eprint       = {2606.30048},
  archiveprefix = {arXiv},
  primaryclass = {math.RA},
  url          = {https://arxiv.org/abs/2606.30048},
  note         = {arXiv:2606.30048 [math.RA]}
}

@article{KaygorodovMartinPaez2025,
  author  = {Kaygorodov, Ivan and Mart{\'{\i}}n Gonz{\'{a}}lez, C{\'{a}}ndido and P{\'{a}}ez-Guill{\'{a}}n, Pilar},
  title   = {Central Extensions of Axial Algebras},
  journal = {Journal of Algebra},
  volume  = {662},
  pages   = {797--831},
  year    = {2025},
  doi     = {10.1016/j.jalgebra.2024.09.001},
  eprint  = {2211.00334},
  archiveprefix = {arXiv},
  primaryclass = {math.RA},
  note    = {Preprint: arXiv:2211.00334 [math.RA]}
}

@article{KhasrawMcInroyShpectorov2020,
  author  = {Khasraw, S. M. S. and McInroy, J. and Shpectorov, S.},
  title   = {On the Structure of Axial Algebras},
  journal = {Transactions of the American Mathematical Society},
  volume  = {373},
  number  = {3},
  pages   = {2135--2156},
  year    = {2020}
}

@misc{MamontovShpectorovZhelyabin2026,
  author       = {Mamontov, Andrey and Shpectorov, Sergey and Zhelyabin, Victor},
  title        = {Radicals in Primitive Axial Algebras},
  year         = {2026},
  eprint       = {2602.11984},
  archiveprefix = {arXiv},
  primaryclass = {math.RA},
  url          = {https://arxiv.org/abs/2602.11984},
  note         = {arXiv:2602.11984 [math.RA]}
}

\end{document}